\theoremstyle{plain}
\newtheorem{thm}{Theorem}[section]
 \newtheorem{lem}[thm]{Lemma}
 \newtheorem{prop}[thm]{Proposition}
 \newtheorem{dfn}[thm]{Definition}
\newcommand{\diam}{\text{\rm diam}}
\begin{document}

\title{Alternating knots with large boundary slope diameter}

\author{Masaharu Ishikawa}
\address[M. Ishikawa]{Department of Mathematics, Hiyoshi Campus, Keio University, 4-1-1 Hiyoshi, Kohoku-ku, Yokohama-shi, Kanagawa, 223-8521, Japan}
\email{ishikawa@keio.jp}
\author{Thomas W.~Mattman}
\address[T. W. Mattman]{Department of Mathematics and Statistics,
California State University, Chico,
Chico, CA 95929-0525}
\email{TMattman@CSUChico.edu}
\author{Kazuya Namiki}
\address[K. Namiki]{Department of Mathematics, Saitama University, 255 Shimo-Okubo, Sakura-ku, Saitama-shi, Saitama, 338-8570, Japan}
\author{Koya Shimokawa} 
\address[K. Shimokawa]{Department of Mathematics, Saitama University, 255 Shimo-Okubo, Sakura-ku, Saitama-shi, Saitama, 338-8570, Japan}
\email{kshimoka@rimath.saitama-u.ac.jp}

\thanks{The first author is supported by JSPS KAKENHI Grant Numbers JP19K03499, JP17H06128 and Keio University Academic Development Funds for Individual Research.
The second author thanks the Math Department of Saitama University for their hospitality during 
multiple visits related to this project.
The last author is supported by JSPS KAKENHI Grant Numbers 16K13751, 16H03928, 17H06460, 17H06463, and JST CREST JPMJCR17J4.
}

\dedicatory{Dedicated to Steve Boyer on his 65th birthday.}

\begin{abstract}
We show that, for an alternating knot, the ratio of the diameter of the set of boundary slopes to the crossing number can be arbitrarily large.
\end{abstract}

\maketitle

\section{Introduction}

Let $K$ be a knot in $S^3$ and $c(K)$ the crossing number of $K$.
The {\em diameter} of the set of boundary slopes of $K$, denoted by $\diam(K)$,  is the difference, as rational numbers, between the maximal boundary slope and the minimal boundary slope of $K$.
We show that, for alternating knots, the ratio of the diameter of the boundary slopes to the crossing number can be arbitrarily large.

Curtis and Taylor ~\cite{CT} showed the diameter of the boundary slopes of essential spanning surfaces of an alternating knot $K$ is $2c(K)$.
See also \cite{Howie}.
Ichihara and Mizushima~\cite{IM} argued that, for a Montesinos knot $K$, the ratio $\frac{\diam(K)}{c(K)}$ is at most two, with equality if and only if $K$ is alternating.
In fact, they showed $\frac{\diam(K)}{c(K)}\ge 2$ in the case of alternating knots  (whether or not they are Montesinos).
According to Culler's calculation of the A-polynomial \cite{Culler},
the $8$--crossing non-Montesinos alternating knots $8_{17}$ and $8_{18}$ have boundary slopes
$-14$ and $14$ so that the diameter is at least $28$,
which is larger than twice the crossing number.
Those calculations are confirmed by Kabaya, who, 
by applying his method for the deformation variety~\cite{Kabaya-JKTR},
also demonstrated diameters in excess of twice the crossing 
number for the alternating knots $9_{32}$, $9_{33}$, $9_{34}$, $9_{39}$, $10_{82}$, and $10_{94}$. In the case
of $9_{39}$, he showed the diameter is at least $30$, which is more than three times the crossing number \cite{Kabaya}.
Dunfield and Garoufalidis \cite{DG} also constructed interesting examples of alternating knots with rational boundary slopes 
using spun-normal surfaces.

Here we prove that, for an alternating knot, the ratio of the diameter of the boundary slopes to the crossing number can be arbitrarily large.

\begin{thm}
\label{thm:diameter}
For any positive number $r$,
there exists an alternating knot $K$ such that the ratio
 $\frac{\diam(K)}{c(K)}>r$.
In particular, there is a sequence of alternating knots
$\{K_n\}_{n=1}^\infty$
such that ${\displaystyle \lim_{c(K_n)\to \infty}}\frac{\diam(K_n)}{c(K_n)}=\infty$.
\end{thm}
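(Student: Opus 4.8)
\emph{Reduction.} First note that it suffices to prove the ``in particular'' statement, and for that it is enough to produce, for each $n$, an alternating knot $K_n$ together with a properly embedded surface $S_n\subset E(K_n)$ that is incompressible, $\partial$-incompressible, and not $\partial$-parallel, whose boundary slope $m_n$ satisfies $|m_n|/c(K_n)\to\infty$. Indeed, a minimal genus Seifert surface is always an essential spanning surface of boundary slope $0$, so $0$ is a boundary slope of every knot, and together with $S_n$ this gives $\diam(K_n)\ge|m_n|$, hence $\diam(K_n)/c(K_n)\to\infty$. Two observations guide the search. Combining the Curtis--Taylor theorem with the fact that $0$ is a spanning slope, the boundary slopes of essential spanning surfaces of an alternating knot all lie in $[-2c(K),2c(K)]$; so once $|m_n|>2c(K_n)$ the surface $S_n$ is necessarily non-spanning, which means the construction must exploit structure that spanning surfaces cannot see --- and, by Ichihara--Mizushima, that Montesinos knots cannot have.

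\emph{The family and the surface.} I would build $K_n$ by an iterated construction modelled on the known sporadic alternating knots that already beat $2c$, such as $8_{18}$ and $9_{39}$: fix an alternating ``template'' and a fixed alternating tangle and insert the tangle into the template $n$ times, so that $K_n$ is presented by a reduced alternating diagram $D_n$ with $an+O(1)$ crossings --- for instance a Turk's-head or weaving type family, or, more generally, an arborescent family associated to a long tree. Since $D_n$ is reduced alternating, $c(K_n)$ is its crossing number by the Kauffman--Murasugi--Thistlethwaite theorem. Inside $E(K_n)$ I would then locate a surface $S_n$ that threads through the $\sim n$ repeated blocks a number of times that itself grows with $n$, so that $S_n$ ``spirals'' and its boundary curves on $\partial N(K_n)$ pick up slope $m_n$ of order $n^2$ --- or at any rate of order larger than $n$ --- giving $|m_n|/c(K_n)$ of order $n$. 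When the $K_n$ are arborescent, the natural way to produce and analyze $S_n$ is a Hatcher--Oertel type edge-path construction adapted to the tangle decomposition: $S_n$ is assembled from essential surfaces in the constituent rational tangles, glued along the Conway spheres, and it is the extreme edge paths that carry the large slope. The value $m_n$ is then obtained by isotoping $\partial S_n$ onto $\partial N(K_n)$ and reading off its winding relative to the $0$-framed longitude; this is routine bookkeeping, but a single sign or framing slip changes the leading term.

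\emph{Essentiality: the main obstacle.} The crux is proving that $S_n$ is incompressible and $\partial$-incompressible (and not $\partial$-parallel). Essential surfaces in alternating knot exteriors are tightly constrained --- by Menasco--Thistlethwaite, by Menasco's ball decomposition for alternating diagrams, and by Ozawa's work on state surfaces --- so one cannot merely exhibit a surface; one must check directly, by innermost-disk and outermost-arc arguments relative to $D_n$ (or, in the arborescent case, by the standard incompressibility criteria for surfaces meeting Conway spheres in the Hatcher--Oertel framework), that no compressing or $\partial$-compressing disk exists. Once this is established, $\diam(K_n)\ge|m_n|$ and $c(K_n)=an+O(1)$ give $\lim_{c(K_n)\to\infty}\diam(K_n)/c(K_n)=\infty$, and the first assertion follows by choosing $n$ with $|m_n|/c(K_n)>r$.
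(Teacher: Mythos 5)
Your overall strategy --- an alternating arborescent family analyzed by a Hatcher--Oertel-style edgepath construction, essentiality checked by innermost-disk arguments, and the diameter bounded below via the slope-$0$ Seifert surface --- is exactly the route the paper takes (it uses $K_n = N\bigl((-\frac1n+\frac1{n+1})\circ(-\frac1n+\frac1{n+1})\bigr)$, an achiral alternating knot with $4n$ crossings and boundary slopes $\pm(2(n+1)^2-4)$). But as written the proposal has a genuine gap at its center: you never identify the mechanism that forces the boundary slope to grow superlinearly in the crossing number. Saying that $S_n$ ``spirals'' through the blocks ``a number of times that itself grows with $n$'' is the conclusion you need, not an argument for it; and your proposed template --- inserting a fixed alternating tangle $n$ times into a fixed diagram --- would naively give $n$ blocks each contributing a bounded amount of twisting, hence slope $O(n)$ and a bounded ratio. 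Since Ichihara--Mizushima show the ratio is at most $2$ for every Montesinos knot, some feature specific to the non-Montesinos structure must be exploited, and you do not say what it is.

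The missing idea is the slope inversion at the Conway sphere of a tangle product. In the paper's family the inner Montesinos tangle $-\frac1n+\frac1{n+1}$ carries a surface built from constant edgepaths with triple $(1,n^2+n-1,-1)$, i.e.\ boundary slope $-\frac{1}{n^2+n}$ on the Conway sphere; the $\frac{\pi}{2}$-rotation-plus-reflection in the tangle product converts this to the triple $(1,0,-n^2-n)$, of slope $-(n^2+n)$ (Lemma~\ref{lem:outer}). The gluing condition then forces the edgepath system in the outer tangle to run from $\langle\frac1{n+1}\rangle$ all the way out to $\langle n^2+n\rangle$, traversing roughly $n^2$ edges of $\mathcal D$, each contributing $-2$ to the twist $\tau$ --- whence a slope of order $n^2$ against only $4n$ crossings. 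So the quadratic growth comes not from many repeated blocks but from a bounded tree shape with twist regions of length $n$ whose shallow inner slope is inverted into a steep outer slope. Without pinning down some such mechanism, and then actually carrying out the incompressibility check that you correctly flag but defer, the proposal is a plausible program rather than a proof.
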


We adapt Hatcher and Oertel's~\cite{HO} method for Montesinos knots
to arborescent knots formed as the product of two Montesinos tangles.
In the next section we give a framework for such knots and in Section~\ref{sec:surf}
we describe how to construct candidate surfaces from edgepaths in this setting. 
Section~\ref{sec:bdy} shows how to calculate the corresponding boundary slopes. Finally, 
in Section~\ref{sec:thmdpf} we prove our main theorem.

\section{Arborescent knots}

In this article, a {\em tangle} $(B,T)$ is a pair of a 3-ball $B$ and its properly embedded 1-submanifold $T$ in $B$ with 4 boundary points NE, NW, SE and SW.
Sometimes a tangle is simply denoted by $T$.
We assume that $B$ is embedded in $\mathbb R^3$ and the four points NE, NW, SE and SW are on a plane $P$ in $\mathbb R^3$.
A {\em rational tangle} is a tangle that is homeomorphic to the pair $(D\times [0,1], \{p_1,p_2\}\times [0,1])$,
where $D$ is a disk and $p_1$ and $p_2$ are points in the interior of $D$.

\begin{dfn}
Let $(B_1,T_1)$ and $(B_2,T_2)$ be two tangles.
We identify the right hemisphere of $\partial B_1$ and the left hemisphere of $\partial B_2$ to form a tangle $(B_1\cup B_2, T_1\cup T_2)$.
The resulting tangle is denoted by $T_1+T_2$ and called the {\em tangle sum} of $T_1$ and $T_2$.
For rational tangles $(B_1,R_1), \cdots, (B_n,R_n)$, the tangle sum $R_1 + \cdots + R_n$ is called a {\em Montesinos tangle}.
Let $\Delta_i$ be the disk $B_i\cap B_{i+1}$.
Let $A$ be $\partial \Delta_i$.
The loop $A$ is called the axis of the Montesinos tangle.
\end{dfn}

\begin{dfn}
Let $R$ and $T$ be tangles. Let $R'$ denote the tangle that results 
from $\frac{\pi}{2}$-rotation of the reflection of $R$. 
Here the reflection means the tangle obtained by exchanging ``over'' and ``under'' at all crossings in the tangle.
Then the 
{\em tangle product} of $R$ and $T$, denoted $R \circ T$ is the sum
$R'+T$. See Figure~\ref{Fig:tanglesum}.
We also define the axis of $R \circ T$ as the union of the boundaries of 
the left and right hemispheres of the attaching tangles.
\end{dfn}

\begin{figure}[htb]
\begin{center}
\includegraphics[width=10cm, bb=129 622 471 712]{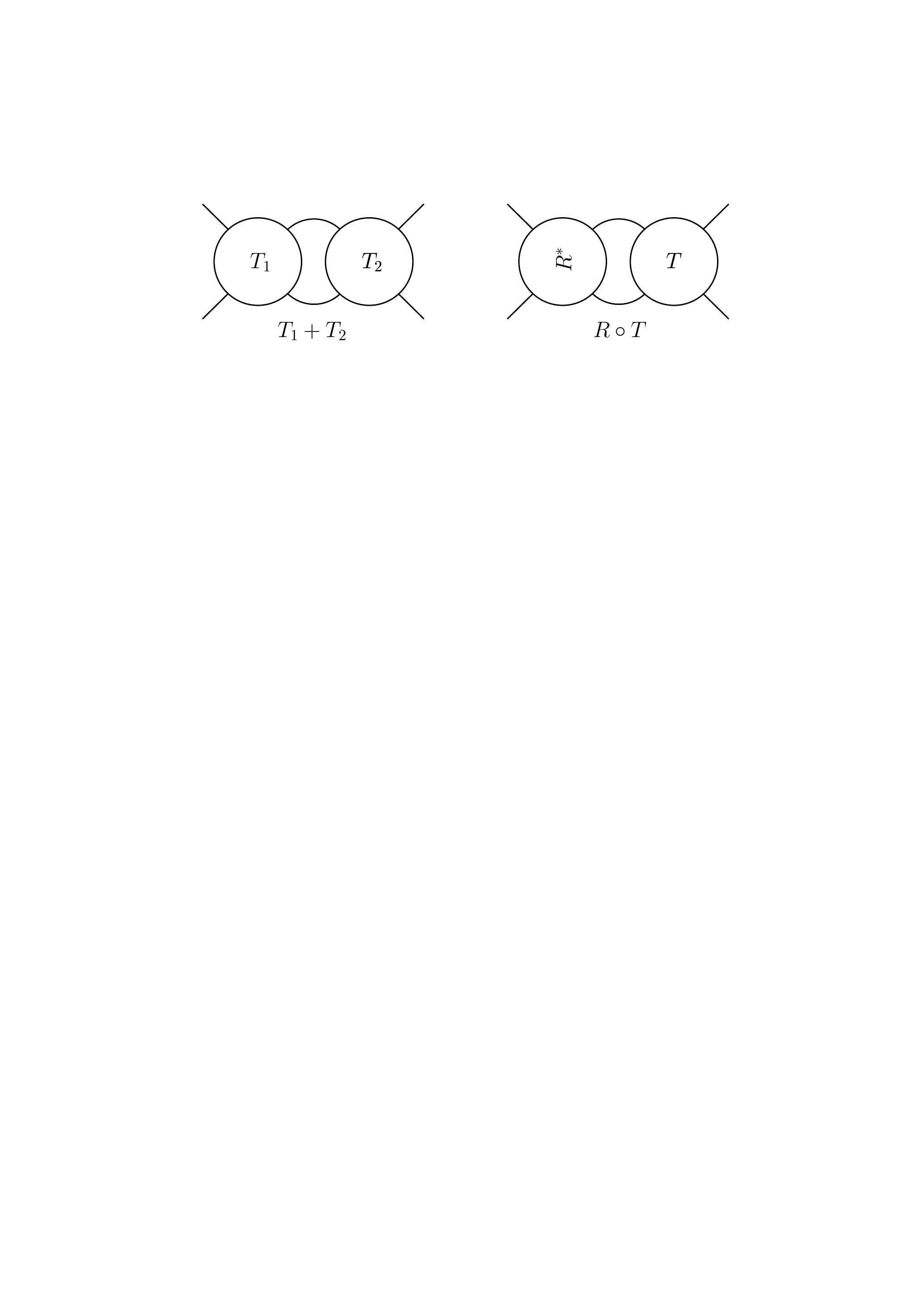}
\end{center}
\caption{\label{Fig:tanglesum}%
Tangle sum and product. $R^*$ denotes the reflection of $R$.
$R'$ is the tangle that results from $\frac{\pi}{2}$-rotation of $R^*$ .}
\end{figure}

\begin{dfn}
Let $(B,T)$ be a tangle.
We can construct a knot or a link by connecting NE and NW with an arc in $P\cap \partial B$
and SE and SW with an arc in $P\cap \partial B$.
This knot or link is denoted by $N(T)$ and called the {\em numerator} of $T$.
\end{dfn}

\begin{dfn}
A knot $K$ is an {\em SN knot} if there are Montesinos tangles $T_1$ and $T_2$ such that
 $K=N(T_1\circ T_2)$.
We call $T_2$ the {\em outer tangle}. The tangle obtained from $T_1$ by 
$\frac{\pi}{2}$-rotation of the reflection will be denoted $T_1'$ and called the 
{\em inner tangle}. 
\end{dfn}

Note that for an alternating Montesinos tangle $T$, the SN knot $N(T \circ T)$ is also alternating.

\begin{dfn}
A tangle is called an {\em algebraic tangle} if it is obtained by a finite sequence of tangle sums and tangle products of rational tangles.
The numerator $N(T)$ of an algebraic tangle $T$ is called an {\em arborescent knot or link}.
\end{dfn}

Note that the numerator of $(B,T)$ can be obtained by identifying the left hemisphere of $B$ with the right one.
If an algebraic tangle $T$ is made of rational tangles $(B_1,R_1),\cdots,(B_n,R_n)$,
then the arborescent knot $N(T)$ is in the $S^3$ that is the union of the $3$-balls $B_1\cup \cdots \cup B_n$.
The union of all the axes is an embedded graph in $S^3$.

\section{
\label{sec:surf}%
Candidate surfaces and edgepaths}

In this section we describe how to build candidate surfaces from edgepaths in the context of 
an arborescent knot.
We closely follow the discussion of Hatcher and Oertel~\cite{HO} 
and begin with a review of some terminology from that 
paper. In Figure~\ref{fig:HOTT}, we show the train track with weight $(a,b,c)$. 
The triple $(a,b,c)$ also labels a point of the  Diagram $\mathcal D$ in \cite{HO}
with vertical coordinate 
``slope'' $v=\frac{c}{a+b}$ and horizontal coordinate $u=\frac{b}{a+b}$.
Let $\langle\frac{p}{q}\rangle$ denote the point $(1,q-1,p)$.

Given such a train track, Hatcher and Oertel describe how to construct an edgepath $\gamma_i$ with the following four properties:
\begin{itemize}
\item[(E1)] The 
starting
point of $\gamma_i$ lies on the edge $\langle \frac{p_i}{q_i}, \frac{p_i}{q_i}\rangle$, and if the starting point is not the vertex $\langle \frac{p_i}{q_i}\rangle$, then the edgepath $\gamma_i$ is constant.
\item[(E2)] $\gamma_i$ is minimal, i.e., it never stops and retraces itself, nor does it go along two sides of the same triangle of $\mathcal D$ in succession.
\item[(E3)] The ending points of the $\gamma_i$'s are all rational points of $\mathcal D$ which
all lie in one vertical line and whose vertical coordinates add up to zero.
\item[(E4)] $\gamma_i$ proceeds monotonically from right to left, ``monotonically" in a weak sense that motion along vertical edges is permitted.
\end{itemize}

As in~\cite{HO}, from an edgepath $\gamma_i$ we can construct a surface $S_i$
in $B_i$ with $\partial S_i\subset \partial B_i\cup K$, 
which we will call 
the {\it surface $S_i$ given by the edgepath $\gamma_i$}. 
So far, except for property E3, we have
described Hatcher and Oertel's construction for an individual rational tangle $(B_i,R_i)$.

\begin{figure}[htb]
\begin{center}
\includegraphics[width=11cm, bb=129 538 484 713]{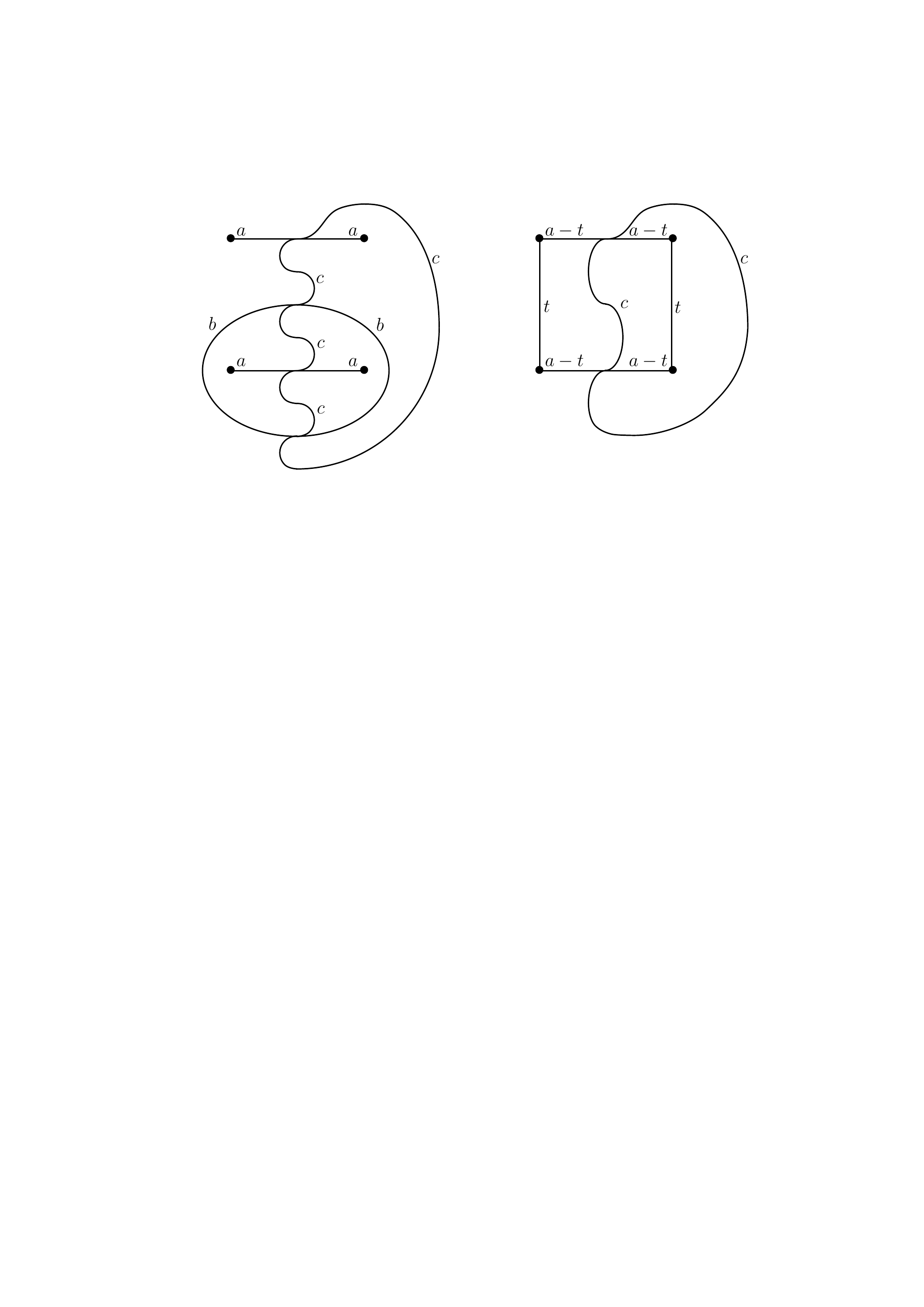}
\end{center}
\caption{
\label{fig:HOTT}%
A train track with weight $(a,b,c)$. The right figure has $t$ $\infty$-edges.}
\end{figure}

Now, let $K$ be an arborescent knot made from rational tangles $(B_1,R_1),\cdots,(B_n,R_n)$.
We will construct a properly embedded surface in the exterior of $K$ using the $S_i$'s
given by the edgepaths $\gamma_i$.
As an arborescent 
knot 
can be constructed by a sequence of tangle sums and tangle products,
we take these operations in turn, starting with the sum.

\begin{lem}[\cite{HO}]\label{lem:sum}
Let $T_1$ and $T_2$ be tangles with 
surfaces $\hat S_1$ and $\hat S_2$ 
having train tracks with weights $(a_1,b_1,c_1)$ and $(a_2,b_2,c_2)$ on the boundary,
respectively.
Suppose $a_1=a_2$ and $b_1=b_2$.
Then we can construct a surface $S$ in $T_1+T_2$ for the triple $(a_1,b_1,c_1+c_2)$
by gluing $\hat S_1$ and $\hat S_2$ canonically.
\end{lem}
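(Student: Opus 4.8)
The plan is to recall Hatcher and Oertel's gluing construction~\cite{HO} and to observe that it goes through verbatim here, where $T_1$ and $T_2$ are allowed to be arbitrary tangles rather than single rational tangles. First I would fix the boundary picture. The tangle sum identifies the right hemisphere $H_1$ of $\partial B_1$ with the left hemisphere $H_2$ of $\partial B_2$, matching tangle endpoints to tangle endpoints; the disk $\Delta=H_1=H_2$ is the gluing region and $A=\partial\Delta$ is the axis of $T_1+T_2$. By hypothesis $\hat S_i$ meets $H_i$ in a $1$--manifold carried by the train track of Figure~\ref{fig:HOTT} with weight $(a_i,b_i,c_i)$, where $a_i$ and $b_i$ are the weights on the two finite-slope branches and $c_i$ is the total weight on the $\infty$--edges, which run alongside $A$. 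One may take the identification $H_1\cong H_2$ so that it carries this train track to itself branch by branch.

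The gluing itself is then the heart of the matter. Because $a_1=a_2$ and $b_1=b_2$, the portions of $\hat S_1\cap H_1$ and $\hat S_2\cap H_2$ lying on the finite-slope branches agree under the identification, so $\hat S_1$ and $\hat S_2$ may be glued along $\Delta$ to a properly embedded surface $S$ in $T_1+T_2$ with $\partial S\subset\partial(B_1\cup B_2)\cup K$. Since the gluing is supported in a collar of $\Delta$, checking that $S$ inherits the relevant properties (embeddedness, and, where it is needed later, incompressibility) reduces to the corresponding checks in~\cite{HO}. It then remains to read off the boundary train track of $S$ near $A$: the $\infty$--edges coming from $\hat S_1$ and from $\hat S_2$ line up end to end across the gluing collar, so their weights add, and $S$ carries the train track with weight $(a_1,b_1,c_1+c_2)$. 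This additivity of the third coordinate is the same additivity that underlies rational tangle arithmetic; note that it is again the equalities $a_1=a_2$, $b_1=b_2$ that force the free ends of these $\infty$--edges to match, so that the two families really do concatenate into a single family of $c_1+c_2$ strands rather than terminating at $\Delta$.

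The only point I expect to require genuine care is the bookkeeping behind this last step: one must verify that Hatcher and Oertel's local model for a rational--tangle ball sees nothing of $B_i$ beyond the weighted train track on $H_i$ and the way its $\infty$--edges sit relative to the axis, so that the model applies equally when $B_i$ carries a Montesinos (or arbitrary) tangle. Granting this, the statement is exactly~\cite{HO} applied to the shared hemisphere $\Delta$.
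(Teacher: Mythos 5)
The paper does not prove this lemma at all: it is stated as a direct citation of Hatcher--Oertel \cite{HO}, so there is no in-paper argument to compare against. Your sketch is a faithful reconstruction of the gluing that \cite{HO} performs: the curve systems on the shared hemisphere $\Delta$ are determined by $(a_i,b_i)$ alone, so $a_1=a_2$, $b_1=b_2$ is exactly the matching condition, and the third coordinates accumulate as one passes through $B_1$ and then $B_2$. That is the right shape of argument and the right division of labor (everything local to $\Delta$, with the rest deferred to the local model of \cite{HO}).

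One point of bookkeeping deserves correction, though. You describe $c_i$ as ``the total weight on the $\infty$--edges,'' but in this paper's conventions (see Figure~\ref{fig:HOTT} and Lemma~\ref{lem:outer}) the $\infty$--edges are a \emph{separate} feature of the train track, counted by the parameter $t$, not by $c$; moreover $c$ is signed (its sign records the direction of twisting, and the train track is mirrored for $c<0$), so it cannot literally be a branch weight. This matters for your concatenation picture: if $c_1$ and $c_2$ have opposite signs, the two families of ``$\infty$--strands'' do not simply line up into $c_1+c_2$ parallel strands --- one needs the signed interpretation, in which oppositely oriented twists cancel under an isotopy, to get the weight $c_1+c_2$ rather than $|c_1|+|c_2|$. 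The correct mechanism is that the \emph{slope} coordinate $v=c/(a+b)$ is what adds (the twisting of the surface accumulates through the two balls), which translates into additivity of $c$ precisely because $a+b$ is common to both sides. With that adjustment your sketch matches what \cite{HO} actually does.
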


Next we consider the tangle product.
The tangle product can be obtained by a sequence of operations consisting of $\frac{\pi}{2}$-rotation, reflection, and tangle sum.
In the next lemma, we will consider how the weight of the train track is changed by $\frac{\pi}{2}$-rotation and reflection.

\begin{lem}\label{lem:outer}
Let $T$ be a tangle 
with surface having a train track with weight $(a,b,c)$ on the boundary.
Then the weight $(x,y,z)$ of the train track of the tangle $T'$ obtained from $T$ by $\frac{\pi}{2}$-rotation and reflection satisfies the following in the case $c>0$ (respectively, $c<0$).
\begin{enumerate}
\item If $T$ has neither slope $0$ edges nor slope $\infty$ edges,
then $(x,y,z)=(a,c-a,a+b)$ (resp.\ $(a, |c|-a, -(a+b))$). 
\item If $T$ has slope 0 edges and does not have slope $\infty$ edges,
then, $T'$ has $a-|c|$ slope $\infty$ edges and triple $(x,y,z) = (c,0,b+c)$ (resp., $(|c|, 0, -(b+ |c|))$).
\item If $T$ does not have slope $0$ edges and has $t$ slope $\infty$ edges where $t<a$, 
then $(x,y,z) = (a, c-a+t,a-t)$ (resp.\ $(a, |c|-a+t,t-a)$).
\item If $T$ has both slope $0$ edges and $t$ slope $\infty$ edges, where $t<a-|c|$,
then $T'$ has $a-t-|c|$ slope $\infty$ edges and triple $(x,y,z)=(c+t,0,c)$ (resp.\ $(|c|+t,0,-c)$).
\end{enumerate}
\end{lem}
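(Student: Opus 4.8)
The plan is to track how the three weights $(a,b,c)$ on the boundary train track transform under the two elementary moves comprising the passage from $T$ to $T'$: a $\frac{\pi}{2}$-rotation and a reflection. The key is to recall from Hatcher--Oertel~\cite{HO} the geometric meaning of the triple: $a$ counts the ``horizontal'' strands of the train track crossing the disk $\Delta$ on the appropriate hemisphere, $b$ counts the complementary ``vertical'' strands, and $c$ measures the twisting (so that the slope is $v=\frac{c}{a+b}$). I would first set up, in a short paragraph with a figure reference to Figure~\ref{fig:HOTT}, exactly which arcs of the train track are carried by which branches, and how the four boundary points NE, NW, SE, SW are permuted by a $\frac{\pi}{2}$-rotation. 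Reflection reverses all crossings, hence changes the sign of the twisting contribution; this is what produces the two parallel cases ($c>0$ versus $c<0$) and explains why $z$ acquires an overall sign $-(a+b)$ versus $+(a+b)$ in case~(1).

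Next I would handle case~(1), the generic situation in which $T$ has no slope-$0$ and no slope-$\infty$ edges, by a direct bookkeeping of branches. After the $\frac{\pi}{2}$-rotation the roles of the horizontal and vertical branches are interchanged, and the twisting region is reorganized: the new horizontal count becomes $x=a$, the new vertical count is what remains of the old twisting once $a$ of the strands are absorbed, giving $y=c-a$, and the new twisting is $z=a+b$ (up to the sign governed by whether $c>0$ or $c<0$). The inequality constraints in the hypotheses (e.g. $t<a$, $t<a-|c|$) are precisely the conditions ensuring these counts stay non-negative and that no further $\infty$-edges are forced; I would remark on this as each case is verified.

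Then cases (2)--(4) are degenerations of case~(1) and I would treat them by the same branch-counting, paying attention to the new phenomenon that appears: when $T$ has slope-$0$ edges, the $\frac{\pi}{2}$-rotation converts some of them into slope-$\infty$ edges of $T'$, and the hypothesis count ($a-|c|$, respectively $a-t-|c|$) records how many. In these cases one of the weight coordinates collapses to $0$ (the $y=0$ appearing in (2) and (4)), reflecting that all remaining strands are either horizontal or $\infty$-type. I would verify (2) directly and then note that (3) is the ``mirror'' degeneration (slope-$\infty$ edges present instead of slope-$0$), obtained by the same argument after swapping the roles of the $0$ and $\infty$ branches, while (4) combines both.

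The main obstacle I anticipate is not any single computation but the case analysis itself: making the branch-counting for the train track rigorous and checking that the combinatorics of which edges become $\infty$-edges is correctly captured by the stated inequalities, so that the train track carrying the surface $S'$ in $T'$ is genuinely the one with the claimed weights. A clean way to organize this is to prove case~(1) in full detail with an explicit picture of the train track before and after the two moves, and then present (2)--(4) as the boundary behavior of that picture when $a$, $b$, or $c$ is taken to a limiting value where a branch degenerates; this keeps the argument short and makes the role of each hypothesis transparent.
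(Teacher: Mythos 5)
Your proposal is correct and follows essentially the same route as the paper: the published proof simply exhibits the train track after reflection and $\frac{\pi}{2}$-rotation and isotopes it into standard position (Figures~\ref{fig:iso1}--\ref{fig:iso4}), reading off the new weights case by case, which is exactly the branch-counting-with-pictures argument you describe. Your observation that the sign dichotomy comes from the reflection and that the inequalities on $t$ govern which branches survive matches the role those figures play in the paper.
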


\begin{proof}
We show the assertion in Case (1). In this case, the train track for $T$ is the one shown on the left in Figure~\ref{fig:HOTT}.
If $c>0$ then the $\frac{\pi}{2}$-rotation of its reflection becomes the train track on the
top left in Figure~\ref{fig:iso1}. Then the assertion follows from the isotopy shown in Figure~\ref{fig:iso1}. 
If $c<0$, the isotopy is given by the mirror image of the isotopy in Figure~\ref{fig:iso1}.

The assertions for Cases (2), (3) and (4) follow from the isotopies shown in Figures~\ref{fig:iso2}, \ref{fig:iso3} and \ref{fig:iso4}, respectively.
\end{proof}

\begin{figure}[htb]
\begin{center}
\includegraphics[width=12cm, bb=129 354 536 713]{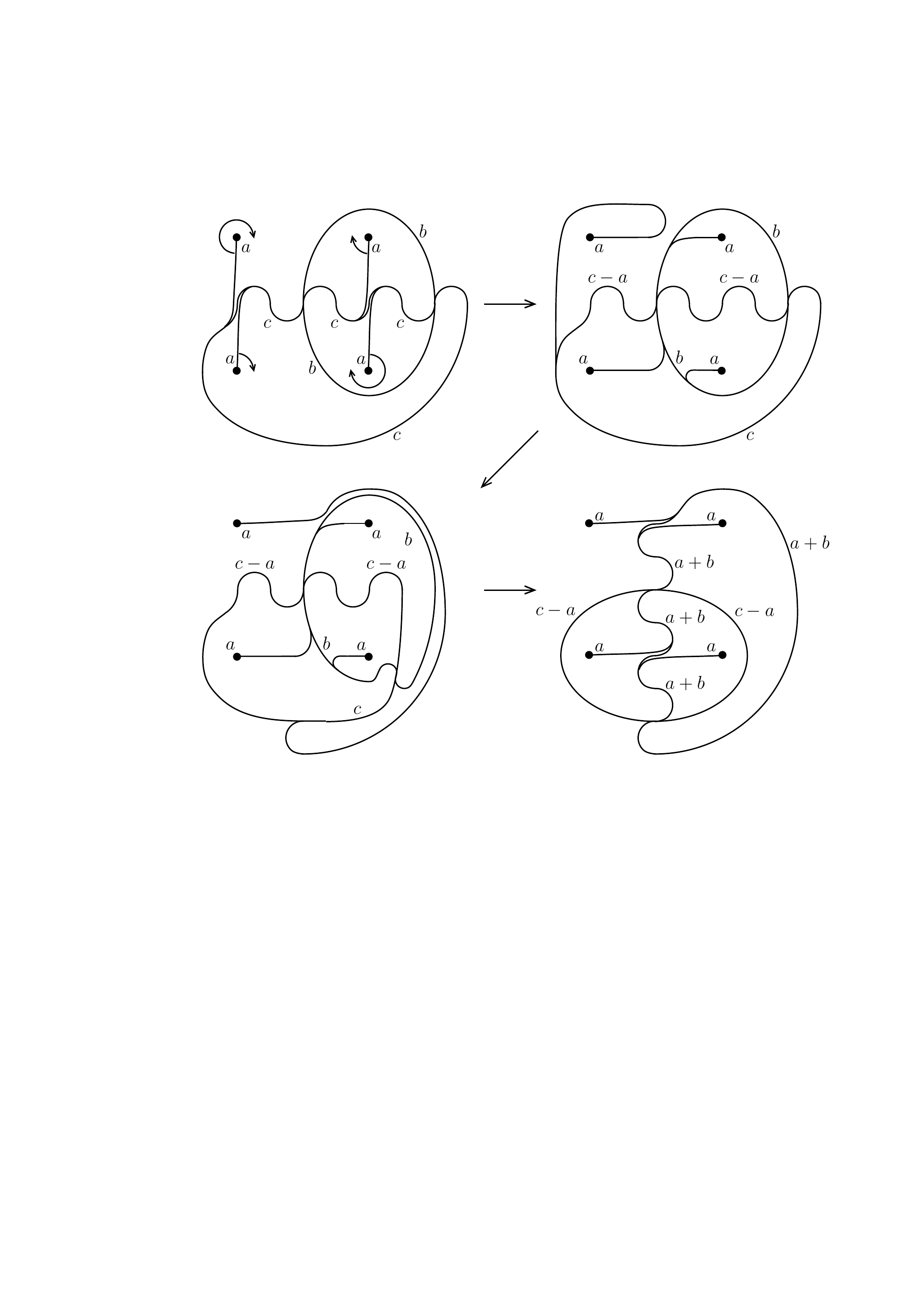}
\end{center}
\caption{
\label{fig:iso1}%
Case (1) : 
$T$ has neither slope $0$ edges nor slope $\infty$ edges.}
\end{figure}

\begin{figure}[htb]
\begin{center}
\includegraphics[width=16cm, bb=129 578 594 713]{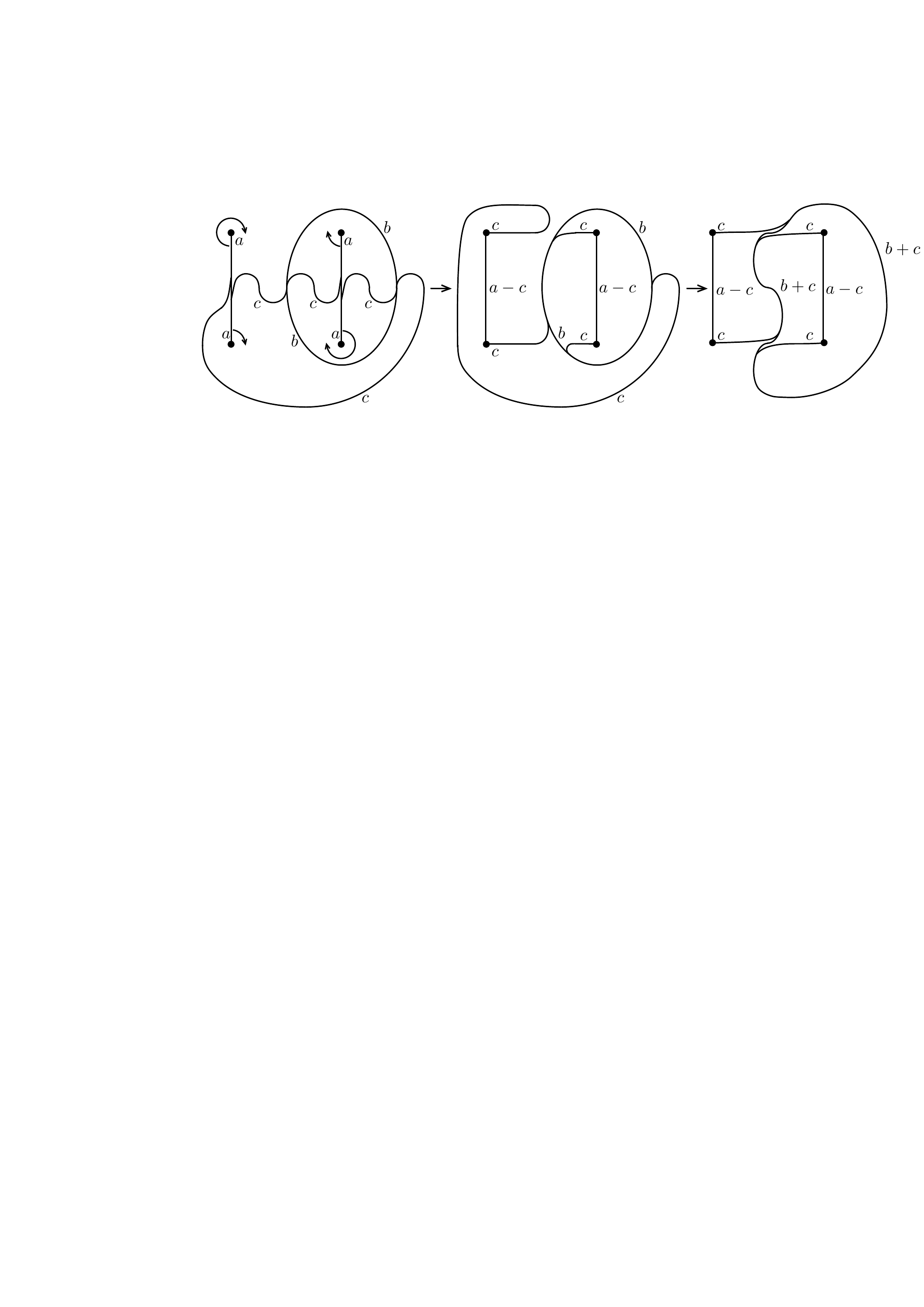}
\end{center}
\caption{
\label{fig:iso2}%
Case (2) : 
$T$ has slope $0$-edges. Then $T'$ has $a-c$ slope $\infty$ edges.}
\end{figure}

\begin{figure}[htb]
\begin{center}
\includegraphics[width=12cm, bb=129 566 470 713]{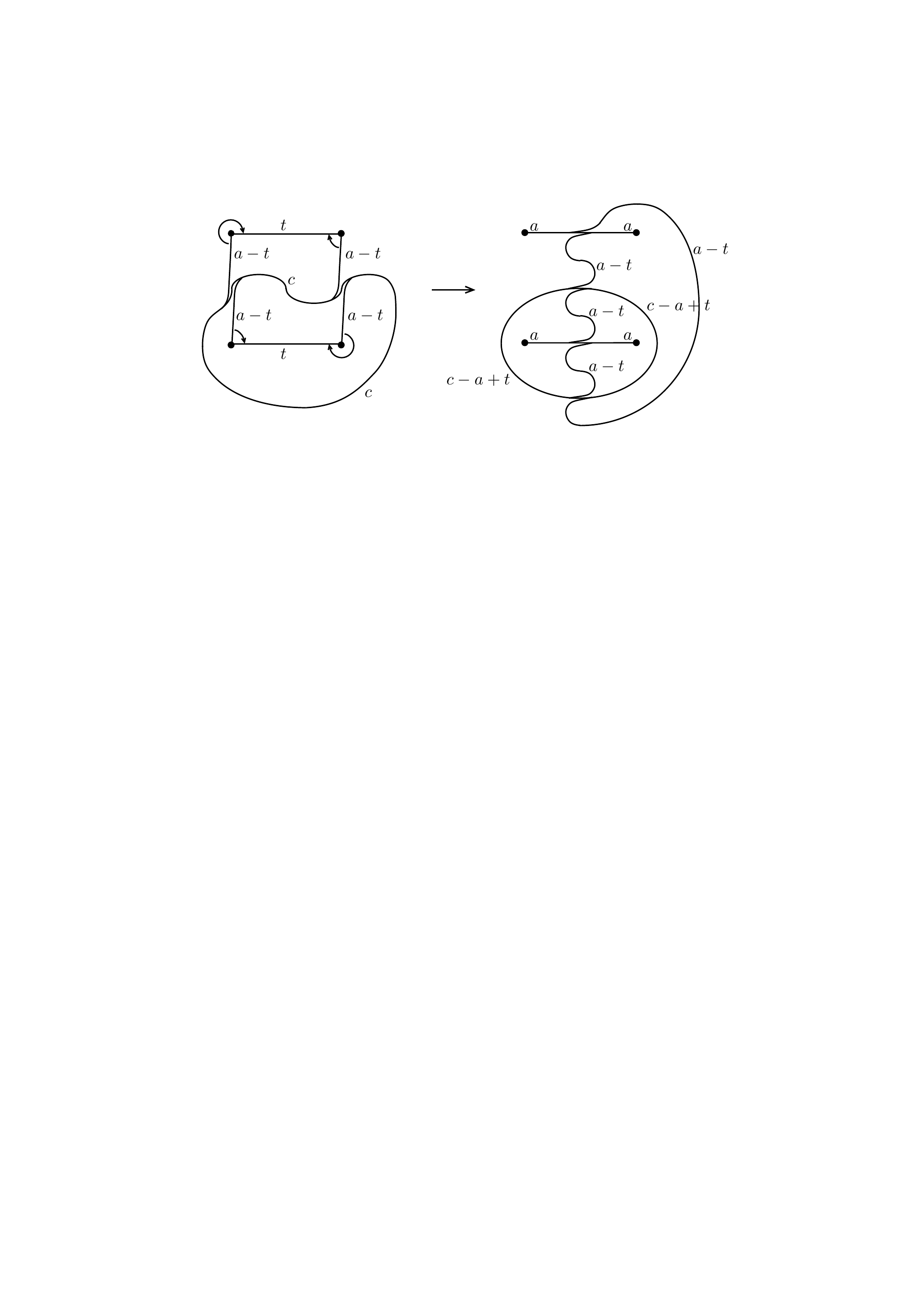}
\end{center}
\caption{
\label{fig:iso3}%
Case (3) : 
$T$ has $t$ slope $\infty$ edges with $t < a$.}
\end{figure}

\begin{figure}[htb]
\begin{center}
\includegraphics[width=12cm, bb=129 577 459 713]{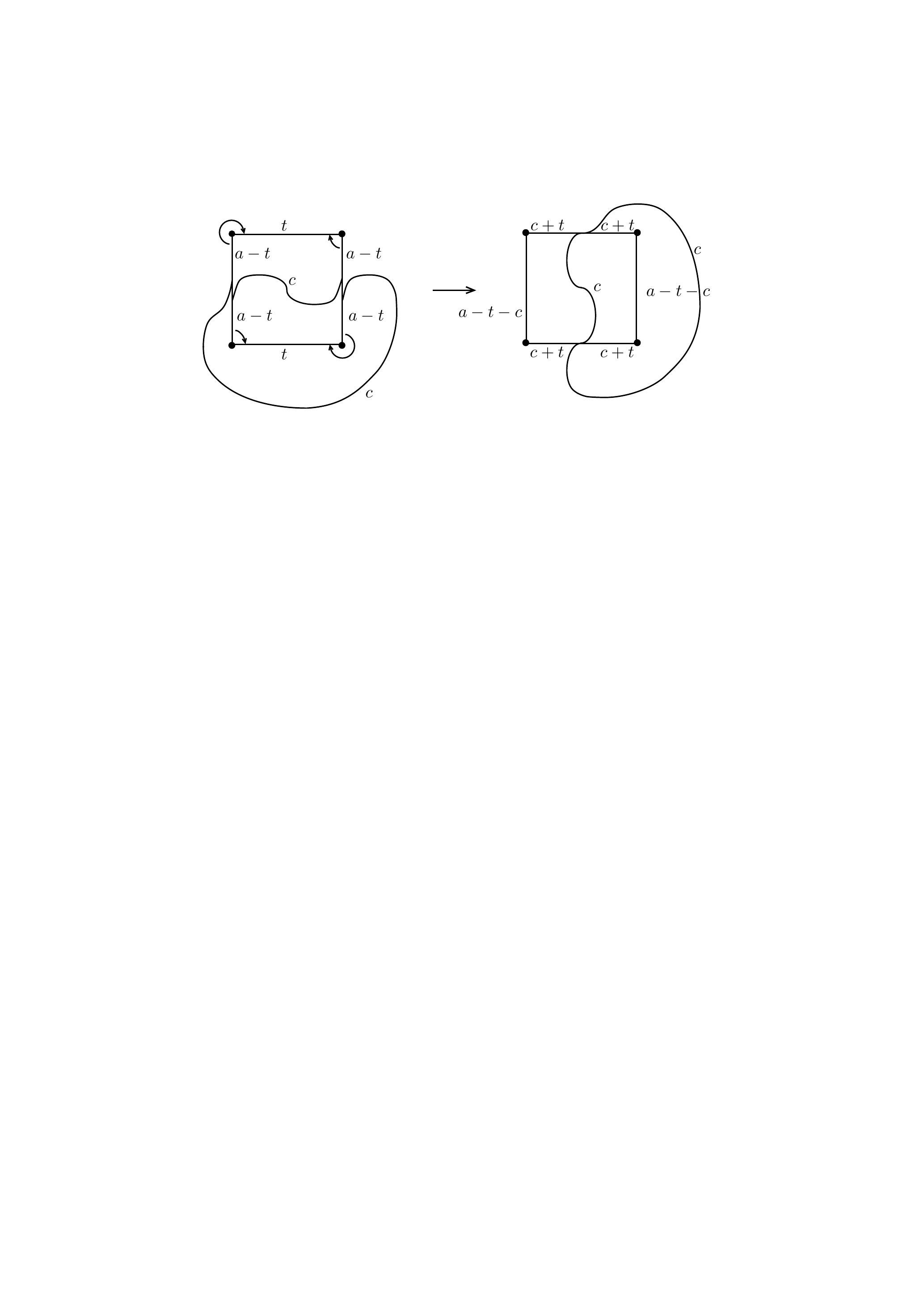}
\end{center}
\caption{
\label{fig:iso4}%
Case (4) : 
$T$ has $t$ slope $\infty$ edges with $t < a-c$. Then $T'$ has $a-t-c$ slope $\infty$ edges.}
\end{figure}

To construct a candidate surface 
of an arborescent knot
$K$, we replace property E3 of Hatcher and Oertel with the following.

\begin{itemize}
\item[(E3')] The ending points of the $\gamma_i$'s are all rational points of $\mathcal D$
chosen so that the surfaces $S_i$ given by the edgepaths $\gamma_i$
constitute a surface bounded by the knot $K$. 
We can easily check if the $S_i$ constitute a surface using Lemmas~\ref{lem:sum} and \ref{lem:outer}.
\end{itemize}


Note that, in~\cite{Wu}, Wu studied exceptional surgeries of large arborescent knots. 
The class of SN knots in our paper includes knots of type II in~\cite{Wu}.
In that paper, he used a different train track that 
works especially
well for knots of type II.
In our paper, we follow the notation of Hatcher and Oertel to emphasize that the observations in Lemmas~\ref{lem:sum} and~\ref{lem:outer} work for any arborescent knot.
In fact, these lemmas work for any tangle with a train track.

\section{
\label{sec:bdy}%
Calculation of boundary slopes}

In this section we will calculate the boundary slope of a candidate surface 
$S$ of an arborescent knot.
Let $(B_1,R_1),\cdots,(B_n,R_n)$ be rational tangles constituting an arborescent knot.
For each surface $S_i$ in $B_i$ given by an edgepath,
we define $\tau(S_i)$ to be $\tau(S_i)=2(e_--e_+)$, where $e_+$ (resp. $e_-$) is the number of edges of the edgepath which increase (resp. decrease) slope, allowing fractional values for $e_{\pm}$ corresponding to a final edge traversing only a fraction of an edge of the diagram $\mathcal D$.
See~\cite[page 460]{HO} for
a 
precise explanation.
This number $\tau(S_i)$ counts how many times the surface $S_i$ rotates around the knot in $B_i$. 
In particular, if  $S_i$ is given by a constant edgepath then we have $\tau(S_i)=0$.
In~\cite{HO}, $\tau(S)$ is defined to be the sum of these numbers for rational tangles constituting a Montesinos knot. We use the same idea for arborescent knots.

Let $\hat S$ be a surface in an algebraic tangle $T$ obtained by gluing surfaces given by edgepaths in the rational tangles constituting $T$.
We call $\hat S$ a {\it surface given by edgepaths}.
We define $\tau(\hat S)$ by applying Definitions~\ref{def41} and~\ref{def42} below inductively.

The first definition is for a tangle sum.

\begin{dfn}\label{def41}
Let $T_1$ and $T_2$ be algebraic tangles with surfaces $\hat S_1$ and $\hat S_2$, given by edgepaths, with the triples $(a,b,c_1)$ and $(a,b,c_2)$, respectively.
Let $\hat S$ be the surface in $T_1+T_2$ with the triple $(a_1,b_1,c_1+c_2)$ obtained by gluing $\hat S_1$ and $\hat S_2$, whose existence is stated in Lemma~\ref{lem:sum}.
Then $\tau(\hat S)$ is defined by
$\tau(\hat S)=\tau(\hat S_1)+\tau(\hat S_2)$.
\end{dfn}


Before giving the second definition, we define $\tau'(\hat S)$,
which 
measures how much the surface rotates along the knot during the isotopies in Figures~\ref{fig:iso1} through~\ref{fig:iso4}.
Let $T$ be a tangle with a candidate surface 
$\hat S$ 
having triple $(a,b,c)$ and 
$\hat S'$
be the surface obtained from 
$\hat S$
by one of the isotopies shown in Figures~\ref{fig:iso1} through~\ref{fig:iso4}.
Set 
$m$
to be the number of sheets that rotate around the knot during the isotopy.
Note that 
$m=a$
in Case~(1), 
$m=c$
in Cases~(2) and~(4) and 
$m=a-t$
in Case~(3), where $t$ is the number of slope $\infty$ edges in $T$.
We define 
$\tau'(\hat S)$
as follows.
\[
\tau'(\hat S)=
\begin{cases}
-\frac{2m}{a}\quad \text{if}\quad c>0\\
\frac{2m}{a}\quad \text{if}\quad c<0.
\end{cases}
\]


%

\begin{dfn}\label{def42}
Let $T_1$ and $T_2$ be algebraic tangles with surfaces $\hat S_1$ and $\hat S_2$ given by edgepaths, respectively.
Suppose that  $\hat S_1$ and $\hat S_2$ are glued in $T_1\circ T_2$ canonically as in Lemma~\ref{lem:sum} after the $\frac{\pi}{2}$-rotation and reflection of $T_1$ for the tangle product. 
Let $\hat S$ denote this surface in $T_1\circ T_2$.
Then $\tau(\hat S)$ is defined by 
$\tau(\hat S)=-\tau(\hat S_1)+\tau'(\hat S_1)+\tau(\hat S_2)$.
\end{dfn}



The value $\tau$ for a candidate surface of an arborescent knot is then defined as follows.

\begin{dfn}
Suppose that an arborescent knot $K$ is the numerator of an algebraic tangle $T$, i.e., $N(T)=K$,
and let $S$ be a candidate surface of $K$ obtained from a surface in $T$ given by edgepaths.
Then $\tau(S)$ is defined by $\tau(S)=\tau(\hat S)$.
\end{dfn}
 
To get the boundary slope of the candidate surface $S$ from $\tau(S)$, 
we need to determine $\tau(S_0)$ for a Seifert surface $S_0$.
Remark that we use the same tangle decomposition of $K$ when we calculate $\tau(S)$ and $\tau(S_0)$.

\begin{lem} 
\label{lem:tauS0P}
Let $K$ be an arborescent knot consisting of Montesinos tangles including a rational tangle with even denominator and let $S_0$ be a Seifert surface of $K$.
Then $\tau(S_0)$ is the sum of $(-1)^{k_i}\tau(S_i)$ for all Montesinos tangles $T_i$ constituting $K$, where $k_i$ is the number of reflections applied to the tangle $T_i$.
\end{lem}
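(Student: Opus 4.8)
The plan is to realize $S_0$ as a surface given by edgepaths and then evaluate $\tau(S_0)$ by unwinding the inductive Definitions~\ref{def41} and~\ref{def42} along the tree that describes how $K$ is assembled. First I would fix a tangle decomposition $N(T)=K$, where $T$ is built from the Montesinos tangles $T_1,\dots,T_m$ by tangle sums and tangle products and each $T_i$ is an iterated tangle sum of rational tangles $(B_{i,1},R_{i,1}),\dots,(B_{i,n_i},R_{i,n_i})$. The even denominator hypothesis enters here: it guarantees that among the edgepath systems satisfying (E1), (E2) and (E3$'$) there is one whose surface is connected and orientable, namely (an isotopic copy of) the Seifert surface $S_0$; in each rational tangle the orientability constraint together with the requirement that the pieces glue across the axes pins down the edgepath $\gamma_{i,j}$, and hence a surface $\hat S$ in $T$ with $\tau(S_0)=\tau(\hat S)$. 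Within each Montesinos tangle $T_i$, repeated use of Definition~\ref{def41} then gives $\tau(\hat S|_{T_i})=\sum_j\tau(S_{i,j})=\tau(S_i)$, where $S_i$ is the surface that $S_0$ induces in $T_i$.

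Next I would induct on the construction tree of $T$ over $T_1,\dots,T_m$. At every tangle sum $\tau$ is additive by Definition~\ref{def41}, introducing no sign. At every tangle product $A\circ B$, Definition~\ref{def42} gives $\tau(\widehat{S_A\cup S_B})=-\tau(\hat S_A)+\tau'(\hat S_A)+\tau(\hat S_B)$, so $\tau(\hat S_A)$ is negated and a correction $\tau'(\hat S_A)$ is added. Tracking the signs, a Montesinos tangle $T_i$ is negated exactly once for each tangle product in whose reflected (inner) argument it is nested, a total factor $(-1)^{k_i}$. Collecting everything,
\[
\tau(S_0)=\sum_{i=1}^m(-1)^{k_i}\tau(S_i)+\Xi,\qquad \Xi=\sum_{A\circ B\text{ in }T}(\pm1)\,\tau'(\hat S_A),
\]
where the sign of each summand of $\Xi$ records how that product is nested inside later ones.

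The crux is to show $\Xi=0$, and this is the step I expect to be the main obstacle. For each tangle product $A\circ B$ occurring in $T$, the surface $\hat S_A$ is the part of the orientable Seifert surface $S_0$ lying in $A$, and I would argue that $S_0$ meets the axis of that product ``flatly'': it is a Murasugi sum (plumbing) of its pieces in $A$ and in $B$ along the axis disk, and in particular does not wind around the axis. In the terminology of Lemma~\ref{lem:outer} this means the train track of $\hat S_A$ on that axis has third coordinate $c=0$, so the number $m$ of sheets that rotate during the straightening isotopy satisfies $m=c=0$ (we are in Case~(2) or~(4), not Case~(1), where $m=a\neq 0$), and hence $\tau'(\hat S_A)=0$. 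This is again where the even denominator hypothesis is used: it forces the Seifert surface edgepath in the tangle being reflected to reach the slope~$0$ vertex with $c=0$, placing it into Case~(2) or~(4). With $\tau'(\hat S_A)=0$ for every product, $\Xi=0$ and the lemma follows.

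The genuinely delicate point is the flatness claim in the third paragraph: one must read off from the edgepath data that the orientable surface built from the pieces chosen in the individual rational tangles really is a plumbing along every tangle product axis, so that each $\tau'$ correction vanishes. Everything else is bookkeeping with Definitions~\ref{def41} and~\ref{def42} and Lemmas~\ref{lem:sum} and~\ref{lem:outer}; in the special case of a Montesinos knot (no tangle products, all $k_i=0$) the formula reduces to the additivity already built into Hatcher and Oertel's definition of $\tau(S_0)$.
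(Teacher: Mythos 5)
Your reduction of the lemma to the vanishing of a correction term $\Xi=\sum\pm\tau'(\hat S_A)$, one summand per tangle product, matches the skeleton of the paper's argument: the even-denominator hypothesis is used exactly as you say (to obtain an orientable, single-sheeted surface from edgepaths, following \cite[page 461]{HO}), additivity at tangle sums is Definition~\ref{def41}, and the sign $(-1)^{k_i}$ does come from the negation of $\tau$ under each reflection. The gap is in your third paragraph, which you rightly flag as the crux but do not close, and the mechanism you propose for closing it does not match the geometry. You assert that the Seifert surface piece in the inner tangle of each product reaches the slope-$0$ vertex with $c=0$, placing it in Case~(2) or~(4) of Lemma~\ref{lem:outer} with $m=c=0$ and hence $\tau'=0$. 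In fact the pieces of the Seifert surface have slope $\infty$ on the tangle boundaries, not slope $0$; the slope becomes $0$ only \emph{after} the reflection and $\frac{\pi}{2}$-rotation. Moreover, with $c=0$ the cases of Lemma~\ref{lem:outer} degenerate (Case~(2) would produce the triple $(0,0,b)$), and $\tau'$ is not even defined there, since its definition splits into the cases $c>0$ and $c<0$. So the Seifert surface cannot be pushed through Lemma~\ref{lem:outer} at all, and the ``Murasugi sum, hence $c=0$, hence $\tau'=0$'' step is an unproved assertion whose premise is false.

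What the paper actually does is bypass $\tau'$ entirely for the Seifert surface: each Montesinos tangle carries a slope-$\infty$ piece of $S_0$; reflection and $\frac{\pi}{2}$-rotation turn this into a slope-$0$ piece; and to glue it to the neighboring pieces one inserts a saddle near $\partial B_i$ that changes the slope back from $0$ to $\infty$. The entire content of the vanishing of your $\Xi$ is the observation that this saddle does not change the number of times the surface twists around the knot, so the adjustment contributes $0$ to $\tau(S_0)$. Your outline would be complete if the flatness/plumbing claim were replaced by this saddle argument (or by an actual proof that no rotation of sheets occurs while the reflected pieces are put into gluing position); as written, the decisive step is asserted rather than proved, and the assertion itself is not what happens.
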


\begin{proof}
As in \cite{HO}, we find a piece of the Seifert surface for each Montesinos tangle $(B_i, T_i)$.
In particular, the slopes of 
the surfaces in these tangles are $\infty$.
When we make a tangle product, we take the reflection of $B_i$ and rotate it by $\frac{\pi}{2}$.
Therefore, its slope becomes $0$ after the rotation. 
To glue this with other pieces, we add a saddle near $\partial B_i$ that changes the slope 
from $0$ to $\infty$.
We can find suitable edgepaths whose surface can be glued with keeping the orientability when each Montesinos tangle contains a rational tangle with even denominator, see the explanation in~\cite[page 461]{HO}.
The surface so constructed is single sheeted and orientable, hence a Seifert surface. 
Since adding a saddle does not change the number of twists around the knot, the contribution to $\tau(S_0)$ of this adjustment is $0$. Hence the assertion follows.
\end{proof}

\begin{lem}\label{lemmaSS0}
Let $S$ be a candidate surface of an arborescent knot. Then $\tau(S)-\tau(S_0)$ is the boundary slope of $S$.
\end{lem}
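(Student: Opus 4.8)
The plan is to make the quantity $\tau$ geometric. For a knot $K$ in $S^3$, the boundary slope of a surface $S$ whose boundary is a nonempty union of parallel essential curves on $\partial N(K)$ is the coefficient $m$ when a boundary component is written as $\ell+m\mu$ in the standard longitude--meridian basis; a Seifert surface has slope $0$ because its boundary is the longitude $\ell$. I would show that $\tau(S)$ equals the signed number of times $\partial S$ winds around $K$, measured against an \emph{auxiliary} framing that is read off from the chosen tangle decomposition of $K$, and that with respect to that same auxiliary framing $\partial S_0$ winds $\tau(S_0)$ times. Since the true framing is the one for which the longitude $\partial S_0$ winds zero times, the boundary slope of $S$ is then the difference $\tau(S)-\tau(S_0)$. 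Because the auxiliary framing must be the same in both computations, the tangle decomposition of $K$ used for $S$ and for $S_0$ has to coincide, which is exactly the Remark preceding the statement.

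For a single rational tangle, and hence for a Montesinos knot, this identification of $\tau(S)-\tau(S_0)$ with the boundary slope is precisely Hatcher and Oertel's computation from edgepaths in \cite{HO}, including all the normalization conventions for $\tau$ (the factor $2$, the fractional contribution of a final partial edge, and the counting per strand meeting the relevant hemisphere). So by induction on the number of tangle sums and tangle products used to build the algebraic tangle $T$ with $N(T)=K$, it suffices to check that Definitions~\ref{def41} and~\ref{def42} correctly record the change in the winding of $\partial S$ along $K$ under each operation. In a tangle sum $T_1+T_2$, the surfaces of Lemma~\ref{lem:sum} are glued along a disk with no reflection or rotation, so their winding contributions simply add, giving $\tau(\hat S)=\tau(\hat S_1)+\tau(\hat S_2)$. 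In a tangle product $T_1\circ T_2$, the inner tangle $T_1$ is first reflected — which reverses orientation and hence negates its winding contribution, the $-\tau(\hat S_1)$ term — then rotated by $\frac{\pi}{2}$ and glued to $T_2$; during the gluing, one of the isotopies of Lemma~\ref{lem:outer} drags some sheets of the surface once around $K$, and the resulting extra winding, expressed as a slope, should be exactly $\tau'(\hat S_1)$. As the outer tangle $T_2$ is untouched and contributes $\tau(\hat S_2)$, the total is $\tau(\hat S)=-\tau(\hat S_1)+\tau'(\hat S_1)+\tau(\hat S_2)$, matching Definition~\ref{def42}.

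The remaining point is to justify the formula for $\tau'$, and this is the step I expect to be the main obstacle. I would go through the four isotopies of Figures~\ref{fig:iso1}--\ref{fig:iso4} one at a time and count the number $m$ of sheets carried once around $K$: $m=a$ in Case~(1), $m=|c|$ in Cases~(2) and~(4), and $m=a-t$ in Case~(3), where $t$ is the number of slope $\infty$ edges of $T$. Carrying one sheet once around $K$ contributes $2$ to the winding count, and the normalization divides by the number $a$ of strands, so each isotopy shifts the slope by $\pm\frac{2m}{a}$, with the sign negative when the twisting of $T_1$ is positive ($c>0$) and positive when it is negative ($c<0$) — exactly the definition of $\tau'(\hat S)$. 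The delicate parts are keeping Hatcher and Oertel's normalization conventions consistent through the rotation and reflection, and verifying that the auxiliary framing implicit at each gluing is globally consistent, so that the inductively defined sums $\tau(\hat S)$ genuinely telescope to the total winding of $\partial S$ along $K$. Once this is in place, the identical computation for the Seifert surface $S_0$ (whose $\tau$ value is described by Lemma~\ref{lem:tauS0P}) gives the boundary slope of $S$ as $\tau(S)-\tau(S_0)$.
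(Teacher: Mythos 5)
Your proposal is correct and follows essentially the same route as the paper's proof: interpret $\tau$ as the rotation of the surface around the knot, verify additivity under tangle sum, account for the reflection (sign flip) and the isotopy contribution $\tau'$ under tangle product, and obtain the slope as $\tau(S)-\tau(S_0)$ by normalizing against the Seifert surface as in Hatcher--Oertel. The paper's version is terser (it simply cites the figures for the verification of $\tau'$), while you additionally make the auxiliary-framing bookkeeping explicit, but the substance is the same.
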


\begin{proof}
As in \cite{HO}, the value of $\tau$ can be determined by checking how the surface rotates
around the knot. 
This rotation number is additive for a tangle sum. 
Hence the rotation number of the surface in $T_1+T_2$ 
obtained from surfaces $\hat S_1$ and $\hat S_2$ in $T_1$ and $T_2$, respectively, given by edgepaths, 
as mentioned in Lemma~\ref{lem:sum}, is 
$\tau(\hat S_1)+\tau(\hat S_2)$. This is formulated in Definition~\ref{def41}.
Next we observe the tangle product $T_1\circ T_2$ with surfaces $\hat S_1$ and $\hat S_2$ given by edgepaths.
Remember that the tangle product can be obtained by a sequence of operations consisting of $\frac{\pi}{2}$-rotation,  reflection of $T_1$, and tangle sum.
Since $\hat S'_1$ is the reflection of $\hat S_1$,
the contribution of the twists of $\hat S_1$ to $\hat S'_1$ is $-\tau(\hat S_1)$.
The contribution during the isotopy in Figures~\ref{fig:iso1} through \ref{fig:iso4}
is the value $\tau'(\hat S_1)$, which can be verified directly from those figures.
Therefore,  the rotation number becomes $-\tau(\hat S_1)+\tau'(\hat S_1)+\tau(\hat S_2)$ as in Definition~\ref{def42}. Thus $\tau(S)$ counts the rotation of $S$ around the knot correctly.
The boundary slope of $S$ is given by the difference $\tau(S)-\tau(S_0)$ as in~\cite{HO}.
\end{proof}

\section{
\label{sec:thmdpf}%
Proof of Theorem~\ref{thm:diameter}}

For $n \geq 2$, let $K_n$ denote the SN knot $N(( - \frac{1}{n}+\frac{1}{n+1})\circ ( - \frac{1}{n}+\frac{1}{n+1}))$, see Figure~\ref{fig:KNSN}. Note that $K_n$ is achiral.
Here 
$K_n=N(T_1\circ T_2)$ 
and $(B_1,T_1)=(B_{1,1},T_{1,1})+(B_{1,2},T_{1,2})$ and  $(B_2,T_2)=(B_{2,1},T_{2,1})+(B_{2,2},T_{2,2})$.
Let $A_i$ be the {\em axis} defined by $\partial (B_{i,1}\cap B_{i,2})$ ($i=1,2$).

\begin{figure}[htb]
\begin{center}
\includegraphics[width=12cm, bb=129 405 554 713]{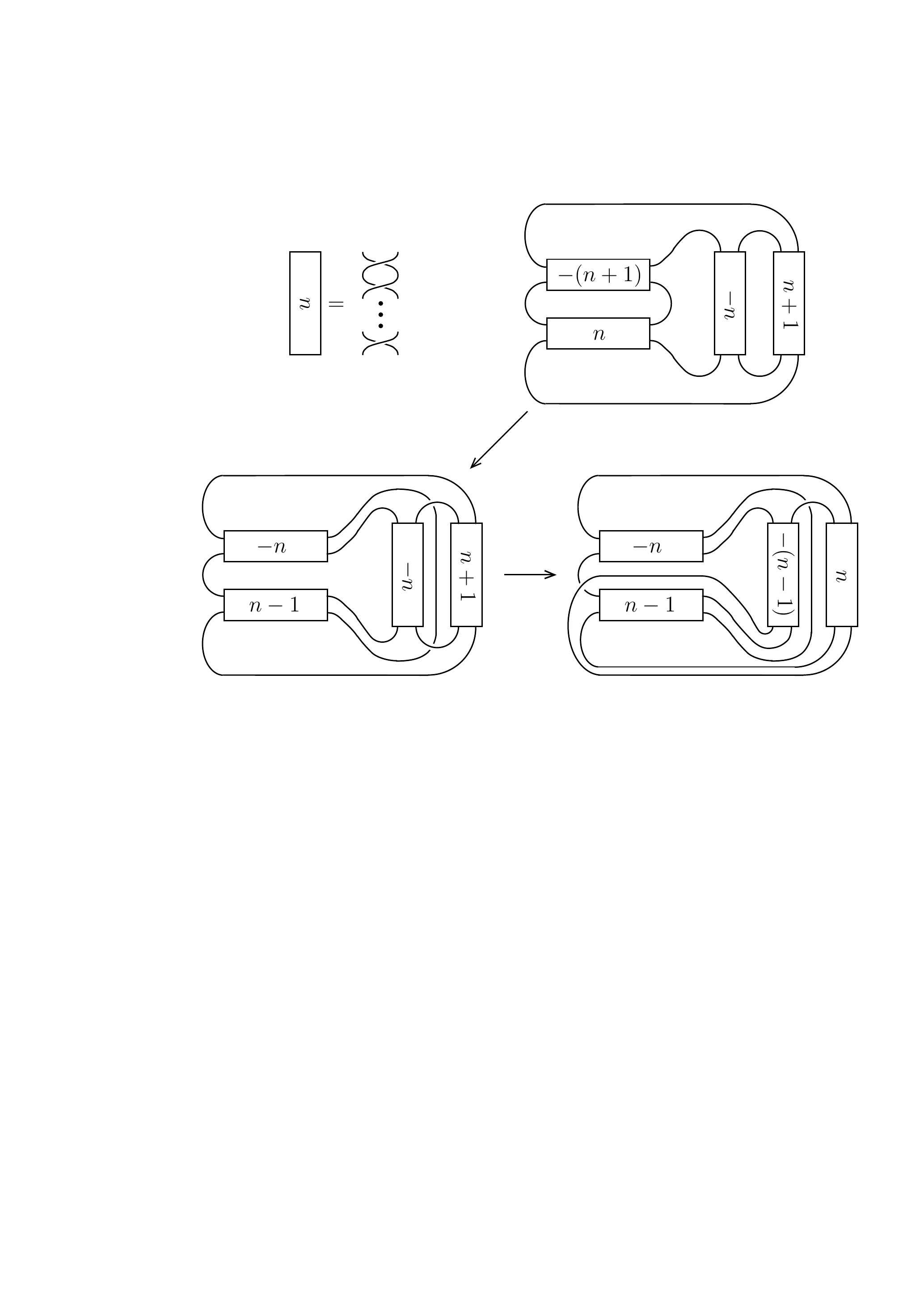}
\end{center}
\caption{
\label{fig:KNSN}%
$K_n$ is an alternating knot with crossing number $4n$.
The number $n$ in a box means the right-handed $n$ half twists and  $-n$ in a box means the left-handed $n$ half twists.}
\end{figure}

\begin{prop}
\label{prop:Knbdys}
$-2(n+1)^2+4$ and $2(n+1)^2-4$ are boundary slopes of essential spanning surfaces of $K_n$.
\end{prop}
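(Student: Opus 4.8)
The plan is to realize the surfaces of Proposition~\ref{prop:Knbdys} as candidate surfaces built from edgepaths in the four rational tangles of $K_n$ and to read off their boundary slopes via Lemma~\ref{lemmaSS0}. Write $K_n=N(T_1\circ T_2)$ with $T_1=R_{1,1}+R_{1,2}$ and $T_2=R_{2,1}+R_{2,2}$, where $R_{i,1}=-\frac1n$ and $R_{i,2}=\frac1{n+1}$. Since Lemma~\ref{lemmaSS0} identifies the boundary slope of a candidate surface $S$ with $\tau(S)-\tau(S_0)$, it suffices to produce two candidate spanning surfaces $S^+$ and $S^-$ of $K_n$ with $\tau(S^\pm)-\tau(S_0)=\pm(2(n+1)^2-4)$ and to verify that they are essential.

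First I would fix the edgepaths. For $S^+$, in each copy of $R_{i,2}=\frac1{n+1}$ take the longest monotone edgepath allowed by (E1), (E2), (E4) starting at $\langle\frac1{n+1}\rangle$ and running through $\langle\frac1n\rangle,\langle\frac1{n-1}\rangle,\dots$ to an integer vertex, and likewise a maximal edgepath in each copy of $R_{i,1}=-\frac1n$; the terminal integer vertices are then pinned down by the requirement (E3$'$) that, after the $\frac\pi2$-rotation and reflection of $T_1$ and the canonical gluings, the four pieces $S_{i,j}$ fit together to a surface spanning $K_n$. Checking (E3$'$) is mechanical: by Lemma~\ref{lem:sum} a tangle sum only requires the first two train-track coordinates $(a,b)$ of the summands to agree, and Lemma~\ref{lem:outer} records exactly how $(a,b,c)$ transforms under the rotation-reflection of $T_1$, so one merely confirms these constraints admit a consistent solution. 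For $S^-$ I would take the mirror-image construction, which is legitimate because $K_n$ is achiral.

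Next I would compute $\tau(S^+)$. By Definition~\ref{def41}, $\tau(\hat S_i)=\tau(S_{i,1})+\tau(S_{i,2})$ for $i=1,2$, and each $\tau(S_{i,j})=2(e_--e_+)$ is read directly off the corresponding edgepath. For the product, Definition~\ref{def42} gives $\tau(S^+)=-\tau(\hat S_1)+\tau'(\hat S_1)+\tau(\hat S_2)$, where $\tau'(\hat S_1)=\pm\frac{2m}{a}$ is evaluated using whichever of the four cases of Lemma~\ref{lem:outer} governs the train track of $\hat S_1$; carrying out this arithmetic should yield $\tau(S^+)=2(n+1)^2-4$. For $\tau(S_0)$: since one of $n,n+1$ is even, $K_n$ contains a rational tangle of even denominator, so Lemma~\ref{lem:tauS0P} applies and gives $\tau(S_0)=(-1)^{k_1}\tau(S_{0,1})+(-1)^{k_2}\tau(S_{0,2})$ with $k_1=1$ (the inner Montesinos tangle $T_1$ is reflected once) and $k_2=0$; because $T_1$ and $T_2$ are the same Montesinos tangle, this equals $-\tau(S_{0,1})+\tau(S_{0,2})=0$. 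Hence the boundary slope of $S^+$ is $\tau(S^+)-\tau(S_0)=2(n+1)^2-4$, and symmetrically $S^-$ has boundary slope $-(2(n+1)^2-4)$.

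Finally, essentiality: one checks incompressibility and $\partial$-incompressibility of the extremal candidate surfaces $S^\pm$ as in~\cite{HO}, or appeals to the fact that $K_n$ is alternating. The step I expect to be the main obstacle is the $\tau$-bookkeeping: getting every sign right in Definition~\ref{def42} (the leading $-\tau(\hat S_1)$ and the sign convention for $\tau'$), correctly identifying the case of Lemma~\ref{lem:outer} that applies to $\hat S_1$, and, above all, confirming that the chosen edgepaths genuinely close up to a spanning surface under (E3$'$). Together with verifying essentiality, this is where the real work lies; it is precisely this computation that delivers a boundary-slope value quadratic in $n$ against the linear crossing number $c(K_n)=4n$, which is what Theorem~\ref{thm:diameter} requires.
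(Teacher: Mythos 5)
Your overall framework (edgepaths in the four rational tangles, the $\tau$-calculus of Definitions~\ref{def41} and~\ref{def42}, Lemma~\ref{lemmaSS0}, and achirality to get the second slope) is the same as the paper's, but two essential pieces are missing, and the first of them means your construction as stated would likely not produce the claimed slope. The paper does \emph{not} take long edgepaths in all four tangles. It takes \emph{constant} edgepaths in both tangles of the inner tangle $T_1$, with starting points chosen at the non-vertex points giving triples $(1,n^2+n-1,-n-1)$ and $(1,n^2+n-1,n)$, so that $T_1$ has triple $(1,n^2+n-1,-1)$; Case~(1) of Lemma~\ref{lem:outer} then converts this to $(1,0,-n^2-n)$, and it is this that forces, via (E3'), the outer edgepath in the $\frac{1}{n+1}$ tangle of $T_2$ to run all the way out to $\langle n^2+n\rangle$. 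That is where the quadratic-in-$n$ slope comes from. If instead you take maximal monotone edgepaths ending at integer vertices in $T_1$, the transformed triple has $|c|=O(n)$, the matching condition only requires $T_2$ to end at height $O(n)$, and the resulting $\tau$ is linear in $n$ --- you would never reach $2(n+1)^2-4$. Saying the terminal vertices are ``pinned down by (E3')'' hides exactly the choice that makes the theorem work.

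The second gap is essentiality. You propose to check incompressibility ``as in [HO]'' or to ``appeal to the fact that $K_n$ is alternating.'' Neither works off the shelf. Hatcher--Oertel's incompressibility criteria are stated for Montesinos knots; here the knot is a product of two Montesinos tangles, and the paper must prove from scratch that a compressing disk $D$ meets $B_1$ (using that $S\cap B_{2,1}$ is an $n$-times-twisted band, so an edgemost disk cannot avoid $B_1$ once $n\ge 2$) before it can run the argument of \cite[Proposition~2.1]{HO}, whose punchline uses precisely the fact that there are two constant edgepaths in $T_1$. Alternating-ness cannot rescue you either: the only general result for alternating knots in this direction (Curtis--Taylor) concerns essential spanning surfaces and gives diameter exactly $2c(K)$, so it points against, not toward, what you need. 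The specific inner-tangle edgepath choice and the bespoke incompressibility argument are the real content of the proposition, and your proposal defers both.
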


\begin{proof}
Since $K_n$ is achiral, it is enough to show that $-2(n+1)^2+4$ is the boundary slope of an essential spanning surface.
For this, we describe the surface in terms of 
edgepaths in $T_1 = T_2 = - \frac1n + \frac{1}{n+1}$. In $T_1$, we use constant edgepaths in both the $-\frac1n$ and $\frac{1}{n+1}$ rational tangles.
Let the triples for the two rational tangles be $(1,n^2+n-1,-n-1)$ and $(1,n^2+n-1,n)$ so that $(1,n^2+n-1,-1)$ is the 
triple for $T_1$.
As there are neither $0$-edges nor $\infty$-edges, 
by Lemma~\ref{lem:outer},
the triple of $T_1$ after reflection, $\frac{\pi}{2}$-rotation and isotopy is $(1, 0, -n^2-n)$. 
Therefore the triple of $T_2$ must be $(1,0,n^2+n)$. 
We choose the edgepaths of the $-\frac1n$ and $\frac{1}{n+1}$ tangles in $T_2$ as $\langle -\frac{1}{n} \rangle \to \langle 0\rangle$ and $\langle \frac{1}{n+1} \rangle \to \langle \frac{1}{n} \rangle \to \cdots \to \langle \frac{1}{2} \rangle \to \langle 1 \rangle \to \cdots \to \langle n^2+n \rangle $, respectively.

Let $S$ be the candidate surface obtained from these edgepaths.
We must argue that the candidate surface $S$ is incompressible.
We adapt the arguments of Hatcher and Oertel~\cite{HO} to this case.
Let $D$ be a compressing disk of $S$ in $E(K)$.
We may assume that $D$ meets $\partial B_i$ and $A_i$ transversely and misses the intersection points $A_i\cap A_j$.
Set $G=D\cap (\partial B_1\cup \partial B_2)$, which is a graph in $D$.
We assume that the number of components of $G$ is minimal among all compressing disks for $S$.

If $G$ contains a loop that does not meet vertices, then by a surgery along an innermost disk of $D$, we will have another compressing disk whose
graph has  fewer components, a contradiction.
Hence there is no loop without vertices in $G$.

We next argue that $D \cap B_1 \neq \emptyset$. 
For a contradiction, assume $D \cap B_1$ is empty.
Let $(B_{2,1},T_{2,1})$ and $(B_{2,2},T_{2,2})$ denote the $-\frac1n$ and $\frac{1}{n+1}$ tangles in $B_2$, respectively.

Suppose that edgemost disks
are contained in $B_{2,1}$.
Let $D'$ be an edgemost disk that is bounded by the union of an arc $\alpha$ on $B_{2,1}\cap B_{2,2}$ and an arc $\beta$ on $\partial D$, see Figure~\ref{fig:conmdisk1} (left).
Since the edgepath of $T_{2,1}$ is $\langle -\frac1n \rangle \to \langle 0\rangle$,
the piece $S\cap B_{2,1}$ of the candidate surface $S$ in $B_{2,1}$ is a band as shown on the right in Figure~\ref{fig:conmdisk1}.
Then the disk $D'$ either lies in the position shown in the figure or is bounded by $\alpha\cup \beta$ such that the endpoints of $\alpha$ lie on the same connected arc of $S\cap \partial B_{2,1}$. The latter case can be removed by isotopy of $D'$.
Consider the former case.
The assumption $D\cap B_1=\emptyset$ implies that the arc $\alpha$ does 
not
intersect the hemisphere $B_1\cap B_{2,1}$. Since the band $S\cap B_{2,1}$ is twisted $n$ times, 
there exists such a disk $D'$ if and only if $|n|\leq 1$. In particular, if $n\geq 2$ then $D'$ and $B_1$ should intersect.
This contradicts the assumption that $D\cap B_1$ is an empty set.

\begin{figure}[htb]
\begin{center}
\includegraphics[width=11cm, bb=129 581 432 713]{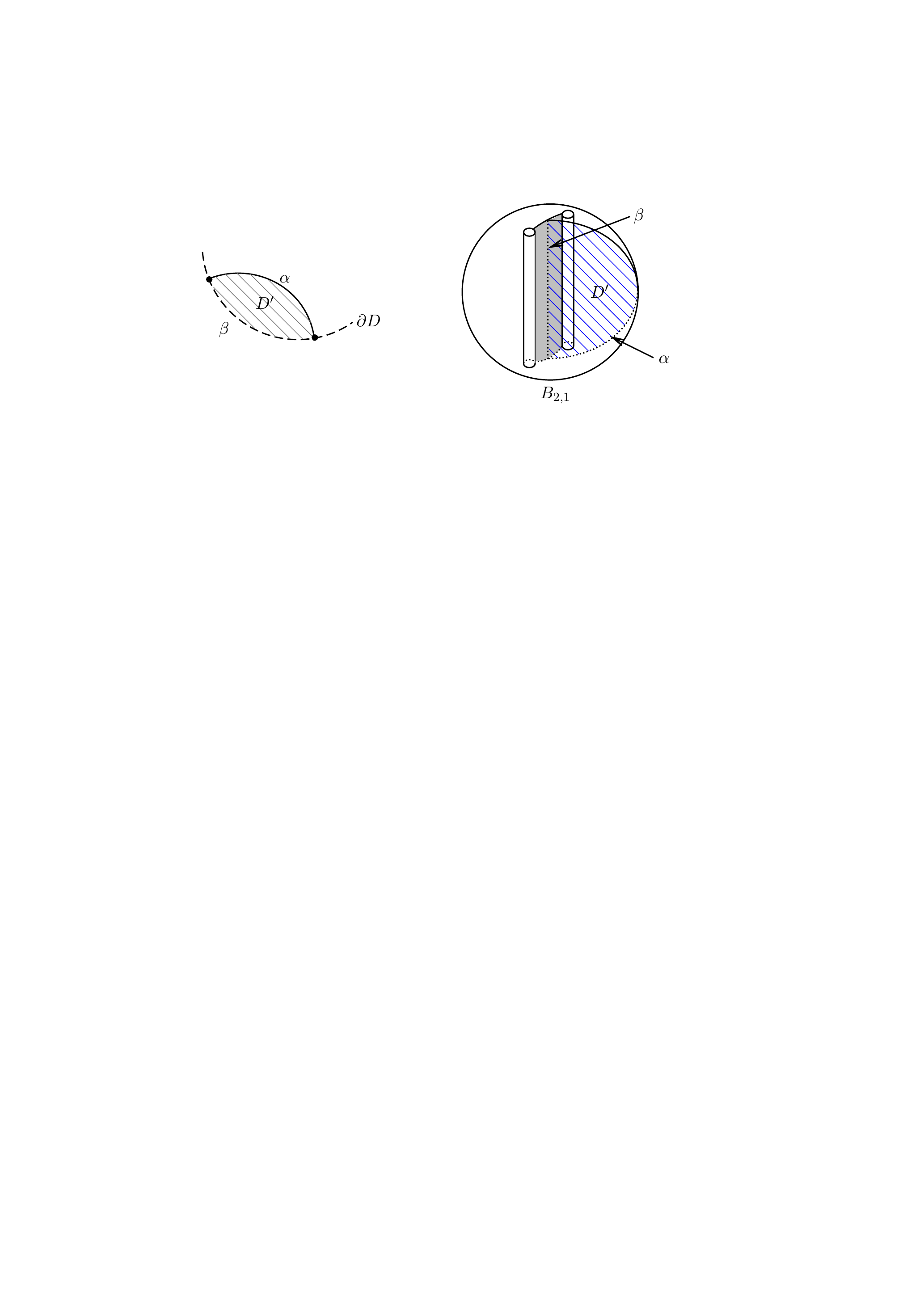}
\end{center}
\caption{
\label{fig:conmdisk1}%
The edgemost disk $D'$ in $B_{2,1}$.}
\end{figure}

Suppose that edgemost disks are in $B_{2,2}$.
Then there is a disk $\Delta$ in $D$ contained in $B_{2,1}$ whose boundary is
$\alpha_1, \beta_1, \ldots, \alpha_k, \beta_k$, where the $\alpha_i$ are on $B_{2,1}\cap B_{2,2}$ and the $\beta_i$ are contained in 
$\partial D$, see Figure~\ref{fig:conmdisk3}.
Since $D\cap B_1$ is empty, $\partial \Delta$ does not intersect the axis $\partial (B_{2,1}\cap B_{2,2})$.
However, there is no such disk $\Delta$ since 
$S\cap B_{2,1}$ is a band as mentioned before.

Therefore, in either case, we have a contradiction.

Now, applying the argument of \cite[Proposition 2.1]{HO}
to the tangles $B_{1,1}$, $B_{1,2}$ and $B_2$,
where $T_{1,1}$ and $T_{1,2}$
are the $-\frac1n$ and $\frac{1}{n+1}$ tangles in $B_1$, respectively,
$D \cap B_1$ includes at most one innermost disk $D'$, which means there is
at most one constant edgepath in $T_1$. However, by assumption, there are two. The contradiction shows there
can be no such compressing disk and $S$ is incompressible.

\begin{figure}[htb]
\begin{center}
\includegraphics[width=7cm, bb=182 599 372 712]{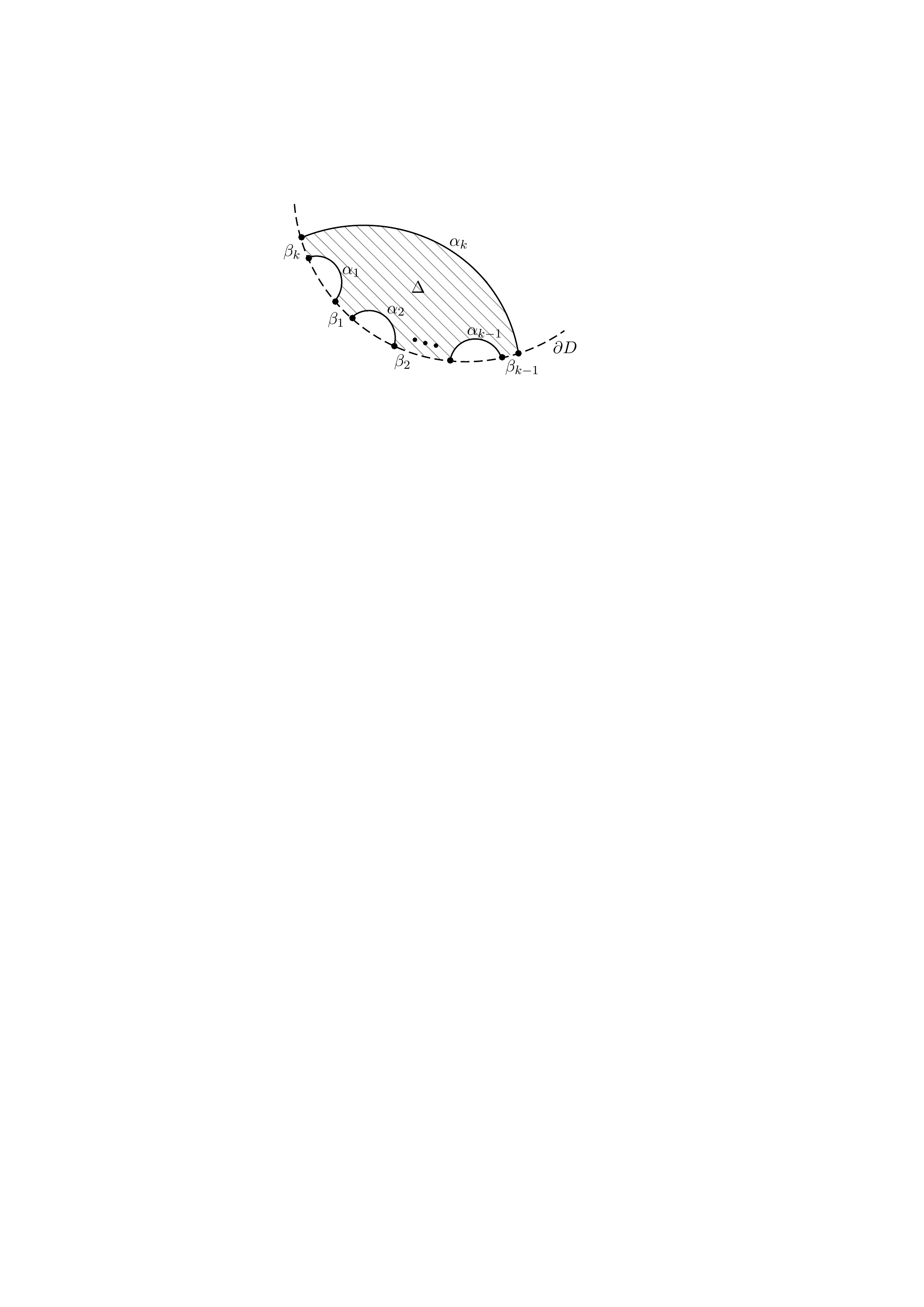}
\end{center}
\caption{
\label{fig:conmdisk3}%
The disk $\Delta$.}
\end{figure}

Finally, we calculate the slope associated to the incompressible surface $S$.
Since both of the tangles in $T_1$ have constant edgepaths, $\tau(S_1) = 0$. 
Since 
there are neither $0$ nor $\infty$ edges in the train track for $T_1$ and $c <0$, 
$\tau'(S_1)=2$.
For $\tau(S_2)$, there is one upward edge on $\mathcal D$ for the $-\frac1n$ tangle and there are
a 
 further $n^2+2n-1$ upward edges for the $\frac1{n+1}$ tangle, so $\tau(S_2) = 2( -1 - (n^2+2n-1)) = -2(n^2+2n)$.
The twist of the surface is therefore $\tau(S) = -\tau(S_1) +\tau'(S_1)+ \tau(S_2) 
= 0 + 2 - 2(n^2+2n) = -2(n+1)^2 + 4$
by Lemma~\ref{lemmaSS0}. 

Let $S_{0,1}$ and $S_{0,2}$ be pieces of a Seifert surface $S_0$ for $T_1$ and $T_2$, respectively, obtained as in~\cite{HO}.
Since $S_{0,1}$ and $S_{0,2}$ are given by the same edgepaths, we have $\tau(S_0) = - \tau(S_{0,1}) + \tau(S_{0,2}) = 0$ by Lemma~\ref{lem:tauS0P}.
Therefore the boundary slope of 
the incompressible surface $S$
is $\tau(S) - \tau(S_0) = -2(n+1)^2+4$
by Lemma~\ref{lemmaSS0}. 
\end{proof}

\begin{proof}[Proof of Theorem~\ref{thm:diameter}]
As in Figure~\ref{fig:KNSN}, $K_n$ has an irreducible alternating diagram with $4n$ crossings, so that $c(K_n) = 4n$.
By Proposition~\ref{prop:Knbdys},
$\diam(K_n) \geq 2(n+1)^2-4 - (-2(n+1)^2+4) = 4(n+1)^2-8$.
Therefore, $\frac{\diam(K_n)}{c(K_n)} \geq \frac{(n+1)^2-2}{n}$ tends to infinity, as required.
\end{proof}

\end{document}